\theoremstyle{plain}
\newtheorem{thm}{Theorem}[section]
\newtheorem{prop}[thm]{Proposition}
\newtheorem{lem}[thm]{Lemma}
\title[Teichm\"uller extremal maps]{Teichm\"uller extremal maps on infinite Riemann surfaces}
\thanks{The author was partially supported by National Science Foundation award DMS-2521870, Simons Collaboration Grant award 00012869, and 
 PSC-CUNY grant 67366-00 55. The author is grateful to the Institute for Advanced Study and AMIAS
Member Fund for their support.}
\author{Dragomir \v Sari\' c}
\address[Dragomir \v Sari\' c] {Current address: Institute for Advanced Study, 1 Einstein Drive, Princeton, NJ 08540, USA}
	\email{dsaric@ias.edu}
\address{PhD Program in Mathematics, The Graduate Center, CUNY \\ 365 Fifth Ave., N.Y., N.Y., 10016 and \newline 
Department of Mathematics, Queens College, CUNY\\ 65--30 Kissena Blvd., Flushing, NY 11367, USA.}
\email{Dragomir.Saric@qc.cuny.edu}
\begin{document}

\subjclass[2020]{Primary 30F60, 32G15}

\begin{abstract} Let $X=\mathbb{D}/\Gamma$ be a Riemann surface with $\Gamma$ of the first kind. We establish a necessary and sufficient criterion for $[f]\in T(X)$ to have a Teichm\"uller-type extremal map. 
 \end{abstract}

\maketitle

\section{Introduction}

We consider a Riemann surface $X=\mathbb{D}/\Gamma$, where $\Gamma$ is a Fuchsian group of the first kind (i.e., the limit set of $\Gamma$ is the unit circle $S^1$). The Teichm\" uller space $T(X)$ of $X$ consists of equivalence classes $[f]$ of quasiconformal maps $f:X\to Y$, where two quasiconformal maps are equivalent if one is homotopic to a post-composition of the other by a conformal map  (see \cite{Ahlfors, GardinerLakic1, Lehto}). The homotopy class $[id]$ of the identity map $id:X\to X$ is called the {\it basepoint}.
Our considerations are valid for infinite Riemann surfaces (see \cite{Bas, BS}). 

The Teichm\"uller distance between $[id]$ and $[f]$ in $T(X)$ is one-half of the logarithm of the minimal quasiconformal constant of the maps in the homotopy class $[f]$. A map with the minimal quasiconformal constant in its homotopy classes is called {\it extremal}. When $X$ is a compact Riemann surface, each homotopy class has a unique extremal map which is given by the horizontal stretching in the natural parameter of a (finite-area) holomorphic quadratic differential on $X$. Such extremal maps are said to be of Teichm\"uller-type. When $X$ is neither a compact surface nor a compact surface minus finitely many points, there are homotopy classes of quasiconformal maps that have extremal maps not of Teichm\"uller-type (see \cite{BLMM, Lehto, ReichStrebel, reichstrebel1} and reference therein). Some of these classes may have even more than one extremal map (for example, see \cite{BLMM}). However, the set of points $[f]\in T(X)$ which have Teichm\"uller-type extremal maps is open and dense (see \cite{GardinerLakic1}). This was proved using the Strebel Frame Mapping Condition (see \cite{GardinerLakic1}), which gives a sufficient condition for a class $[f]$ to contain a Teichm\"uller-type map.

We give a complete characterization for $[f]\in T(X)$ to contain a Teichm\"uller-type extremal map (which is necessarily uniquely extremal). Let $A(X)$ be the space of all finite-area holomorphic quadratic differentials on $X$. For a non-trivial (i.e., not constantly zero) $\varphi\in A(X)$, denote by $\|\varphi\|$ the $L^1$-norm and by $h_{\varphi}$ the measured horizontal foliation of $\varphi$. 
Given $[f]\in T(X)$, we consider the mapping by heights
$$
f_{\#}:A(X)\to A(Y),
$$
where $f:X\to Y$. Let $f_*(h_{\varphi})$ be the push-forward of $h_{\varphi}$ to the image surface $Y$. Then $f_{\#}(\varphi )$ is the unique finite-area holomorphic quadratic differential $\psi\in A(Y)$ whose horizontal foliation is homotopic to $f_*(h_{\varphi})$. The existence of $f_{\#}(\varphi )=\psi $ is established in \cite[Theorem 1.6]{Saric-realization} and \cite[Theorem 1.1]{Saric-ft}, and the uniqueness in \cite{Saric-heights}.

\begin{thm}
\label{thm:main}
Let $X=\mathbb{D}/\Gamma$ be a Riemann surface with $\Gamma$ of the first kind. Then $[f]\in T(X)$ admits a Teichm\"uller-type extremal map if and only if the supremum 
$$
\sup_{\varphi\in A(X)\setminus \{ 0\}}\max\Big{\{}\frac{\| f_{\#}(\varphi )\|}{\|\varphi\|}, \frac{\|\varphi\|}{\| f_{\#}(\varphi )\|}
\Big{\}}
$$ 
is achieved at some $\varphi_{\mathrm{max}}\in A(X)\setminus\{ 0\}$. 

When the supremum is achieved, the Teichm\"uller map stretches the horizontal direction in the natural parameter of $\varphi_{\mathrm{max}}$ by the amount equal to the supremum. 
\end{thm}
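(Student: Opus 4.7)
\emph{Proof plan.} For the forward direction, assume that $[f]$ contains a Teichm\"uller-type extremal representative $T$ with dilatation $K$ associated to $\varphi^*\in A(X)$. In natural parameters of $\varphi^*$, the map $T$ has the form $z=x+iy\mapsto Kx+iy$, so a direct change-of-variables computation yields $\|T_\#(\varphi^*)\|=K\|\varphi^*\|$. Since $T\in[f]$, we have $f_\#=T_\#$, and hence $\|f_\#(\varphi^*)\|/\|\varphi^*\|=K$. For the matching upper bound I would use the heights-version of the Reich--Strebel inequality, $\|h_\#(\varphi)\|\le K(h)\,\|\varphi\|$ for every representative $h\in[f]$ and every $\varphi\in A(X)$, combined with the dual bound $\|\varphi\|\le K(h)\,\|h_\#(\varphi)\|$ which follows from applying the inequality to $h^{-1}$ and using the identity $(h^{-1})_\#\circ h_\#=\mathrm{id}$ (a consequence of uniqueness in \cite{Saric-heights}). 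Specializing to $h=T$ bounds both ratios in the max by $K$, so the supremum equals $K$ and is attained at $\varphi^*$.

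For the converse, suppose the supremum is attained at some $\varphi^*\in A(X)\setminus\{0\}$ with value $K^*$. After replacing $f$ by $f^{-1}$ (and $X$ by $Y$) if necessary, I may assume $\|f_\#(\varphi^*)\|=K^*\|\varphi^*\|$. Let $T:X\to X'$ be the Teichm\"uller map associated to $\varphi^*$ with stretching factor $K^*$, and set $\varphi^{*\prime}:=T_\#(\varphi^*)\in A(X')$, so that $\|\varphi^{*\prime}\|=K^*\|\varphi^*\|$. The goal is to produce a conformal identification $\sigma:X'\to Y$ under which $\sigma\circ T$ is homotopic to $f$; this places $T$ in the class $[f]$ and exhibits it as the desired Teichm\"uller extremal.

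The natural tool is the composition $g:=f\circ T^{-1}:X'\to Y$. Functoriality of the heights mapping gives $g_\#(\varphi^{*\prime})=f_\#(\varphi^*)$, hence $\|g_\#(\varphi^{*\prime})\|=\|\varphi^{*\prime}\|$. The main obstacle is to promote this single equality into the statement that $g$ is homotopic to a conformal map. The plan is to exploit that $\varphi^*$ is a maximizer: perturbing $\varphi^*$ inside $A(X)$ to $\varphi^*+t\eta$ and differentiating the ratio $\|f_\#(\varphi)\|/\|\varphi\|$ at $t=0$ should produce a linear identity which, together with the equality case of the Reich--Strebel main inequality for extremal Beltrami coefficients, forces the extremal Beltrami coefficient in $[f]$ to have Teichm\"uller form $k^*\overline{\varphi^*}/|\varphi^*|$ with $k^*=(K^*-1)/(K^*+1)$. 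The stretching factor of the resulting Teichm\"uller map is then $K^*$ by the calculation in the forward direction, and uniqueness of Teichm\"uller extremals completes the proof.

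The technical heart of the argument is this rigidity step: a single equality in the heights form of the Reich--Strebel inequality should force the extremal representative to be of Teichm\"uller form. On finite surfaces the analogous rigidity follows from compactness of unit balls in $A(X)$; on infinite Riemann surfaces compactness is unavailable, and I expect the realization and uniqueness theorems in \cite{Saric-realization,Saric-ft,Saric-heights} to be essential for carrying it out.
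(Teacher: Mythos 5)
Your forward direction is essentially the paper's. The identity $\|T_{\#}(\varphi^*)\|=K\|\varphi^*\|$ for a Teichm\"uller stretching $T$ and the two-sided bound $\tfrac1K\|\varphi\|\le\|f_{\#}(\varphi)\|\le K\|\varphi\|$ are exactly Lemma~\ref{lem:quasi-inv} plus the short Proposition following it (the paper proves the two-sided bound via Gardiner--Lakic smoothing and the Dirichlet principle rather than anything literally called Reich--Strebel, but the content matches yours).

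The converse is where the real gap is. Your reduction to $g:=f\circ T^{-1}$ is harmless, but it does not simplify anything: the single equality $\|g_{\#}(\varphi^{*\prime})\|=\|\varphi^{*\prime}\|$ carries exactly the information you started with, and you correctly note you cannot yet conclude $g$ is homotopic to a conformal map. The mechanism you propose to close that gap --- perturb $\varphi^*\mapsto\varphi^*+t\eta$, differentiate $\|f_{\#}(\varphi^*+t\eta)\|/\|\varphi^*+t\eta\|$ at $t=0$, and invoke the equality case of Reich--Strebel --- meets a concrete obstruction before the ``compactness'' issue you flag. The heights map $f_{\#}$ is homogeneous under positive scalars but is \emph{not} additive, and in particular $f_{\#}(-\varphi)\neq -f_{\#}(\varphi)$ in general (that this failure disappears at $\varphi^*$ is precisely what must be \emph{proved}); so $t\mapsto\|f_{\#}(\varphi^*+t\eta)\|$ has no evident derivative formula, and the ``linear identity'' you hope to extract from $r'(0)=0$ is undefined. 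The only variational computation available in this setting (Lakic's, used in the paper's Proposition~\ref{prop:limsup_upper_bound}) differentiates in the Beltrami parameter $t$ of a path $g^{t\mu}$, not in the quadratic differential.

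The paper's actual converse is a genuinely different argument and is the entire content of Sections~2--3. Theorem~\ref{thm:max-extremal} shows that attaining the supremum at $\varphi^*$ forces $f_{\#}(-\varphi^*)=-c\,f_{\#}(\varphi^*)$, i.e.\ the heights correspondence intertwines the vertical trajectory structures as well as the horizontal ones; this is proved by exhausting $X$ by finite-area subsurfaces, doubling across step-curve boundaries, approximating by Jenkins--Strebel differentials with one cylinder (Masur), and running a length--area/Cauchy--Schwarz estimate whose equality case in the limit makes the natural parameters of $(f_{\#})^{-1}(-\psi)$ and $\varphi^*$ orthogonal, hence proportional. The $\limsup$ control needed to pass to the limit is the separate Proposition~\ref{prop:limsup_upper_bound}. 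Theorem~\ref{thm:Teich-map} then builds the Teichm\"uller map explicitly by sending the intersection point of a regular horizontal and a regular vertical trajectory of $\tilde\varphi^*$ to the intersection of the $\tilde f_{\#}$-corresponding trajectories of $\tilde\psi$. None of this is a detail to be filled in later: you have correctly located the rigidity step as the heart of the theorem, but the route you sketch (quadratic-differential perturbation plus Reich--Strebel equality) is not the paper's route, and as written it cannot even be started because of the nonlinearity of $f_{\#}$.
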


The above theorem is established for compact surfaces by Marden and Strebel \cite{MardenStrebel-Teich}, and they gave a new proof of the Teichm\"uller existence theorem for the compact surfaces using the fact that the above supremum is always achieved in finite-dimensional $A(X)$. For arbitrary Riemann surfaces, the supremum is achieved for maps that are homotopic to a Teichm\"uller-type map.

{\it Open problems:} Extend the above theorem to arbitrary Fuchsian group $\Gamma$. Characterize the Strebel/Busemann points of $T(X)$ in terms of the height map $f_{\#}:A(X)\to A(Y)$ (see Earle-Li \cite{EarleLi}). Characterize the homotopy classes that have uniquely extremal maps using the height map (see \cite{BLMM}). 

\section{A necessary condition for the existence of the Teichm\"uller extremal maps}

Let $f:X\to Y$ be a $K$-quasiconformal map, with $X$ and $Y$ two arbitrary Riemann surfaces that admit conformal hyperbolic metrics. In particular, we allow $X$ to be the unit disk $\mathbb{D}$ or any infinite Riemann surface. The results in this section do not require that the Fuchsian groups of $X$ and $Y$ are of the first kind. The map $f$ induces the height map $f_{\#}:A(X)\to A(Y)$ which is a bijection (see \cite[Theorem 4.5]{Saric-realization}, \cite[Theorem 1.1]{Saric-ft}). We note that $f_{\#}$ does not preserve the $L^1$-norm. We first establish that $f_{\#}$ maps the unit sphere of $A(X)$ between the two spheres in $A(Y)$ of radii $1/K$ and $K$.

\begin{lem}
\label{lem:quasi-inv}
Let $f:X\to Y$ be a $K$-quasiconformal map. Then, for all $\varphi\in A(X)$,  
$$
\frac{1}{K}\| f_{\#}(\varphi )\|_{L^1}\leq\| \varphi\|_{L^1}\leq K \| f_{\#}(\varphi )\|_{L^1},
$$
where $\|\cdot\|_{L^1}$ is the $L^1$-norm on the corresponding surface. 
\end{lem}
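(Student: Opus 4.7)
The plan is to reduce to a single inequality by symmetry, express both $L^1$-norms as extremal lengths of the associated horizontal measured foliations, and apply the quasi-invariance of extremal length under quasiconformal maps.

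First I would reduce to one inequality. Since $f^{-1}:Y\to X$ is again $K$-quasiconformal, and since the defining property $h_{f_\#(\varphi)}\sim f_*(h_\varphi)$ together with the uniqueness established in \cite{Saric-heights} yields $(f^{-1})_\#=(f_\#)^{-1}$, it is enough to prove
$$
\|f_\#(\varphi)\|_{L^1(Y)}\le K\|\varphi\|_{L^1(X)},\qquad \varphi\in A(X);
$$
the reverse inequality then follows by applying this bound to $f^{-1}$ with $f_\#(\varphi)$ in place of $\varphi$.

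Next I would use the identification of the $L^1$-norm with the extremal length of the horizontal foliation: for a finite-area hqd $\varphi$, the flat $|\varphi|$-metric $\rho_*=|\sqrt\varphi|$ in natural coordinates realizes $\mathrm{Ext}_X(h_\varphi)=\|\varphi\|_{L^1}$, and analogously on $Y$. Because $h_{f_\#(\varphi)}\sim f_*(h_\varphi)$ and extremal length of a measured foliation is homotopy-invariant,
$$
\|f_\#(\varphi)\|_{L^1(Y)}=\mathrm{Ext}_Y(f_*h_\varphi).
$$
The Gr\"otzsch-type distortion inequality $\mathrm{Ext}_Y(f_*F)\le K\,\mathrm{Ext}_X(F)$ for any measured foliation $F$ on $X$ --- obtained by pushing Borel conformal metrics forward through $f^{-1}$ and using $\|D(f^{-1})\|^2\le K\,J_{f^{-1}}$ to compare areas while preserving transverse-measure-weighted leaf lengths --- then yields $\|f_\#(\varphi)\|_{L^1}\le K\|\varphi\|_{L^1}$, as desired.

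The main obstacle is justifying the identity $\|\varphi\|_{L^1}=\mathrm{Ext}_X(h_\varphi)$ on arbitrary, possibly infinite, Riemann surfaces. The classical compact-surface proof uses Jenkins' minimum-area theorem together with the density of Jenkins-Strebel differentials in the finite-dimensional space $A(X)$, and neither ingredient is available in standard form here. I would resort to the realization machinery of \cite{Saric-realization,Saric-ft} to construct $\rho_*$ directly as the witness to the extremal-length value; an alternative is to prove the bound first for Jenkins-Strebel $\varphi$ using the elementary comparison $m_i/K\le m_i'\le K m_i$ between corresponding cylinder moduli on $X$ and on $Y$ (cylinder heights being preserved by $f_\#$) and then approximate, but density of Jenkins-Strebel differentials in $A(X)$ for general $X$ is itself delicate and must be addressed separately.
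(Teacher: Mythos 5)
Your overall strategy --- bound one direction and then invoke $(f^{-1})_{\#}=(f_{\#})^{-1}$ for the reverse --- matches the paper, and the quasi-invariance idea (compare a conformal quantity before and after a $K$-qc map) is the right one. But the route you take through extremal length has a genuine gap that you yourself flag and do not fill: the identity $\|\varphi\|_{L^1}=\mathrm{Ext}_X(h_\varphi)$ on an arbitrary, possibly infinite, Riemann surface. Neither of your two escape routes (constructing the extremal metric directly via the realization machinery, or Jenkins--Strebel density) is established in the references you would have at your disposal, and the second one you concede is ``delicate.'' So as written the argument does not close.

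The paper sidesteps this entirely by working with the Dirichlet integral rather than extremal length, and by using only a \emph{one-sided} minimum-norm statement. Concretely: it first applies Gardiner--Lakic to replace $f$ with $C^\infty$ $K_n$-qc maps $f_n$ in the same class (so that the push-forward $(f_n)_*(h_\varphi)$ is an honest foliation with a finite Dirichlet integral, a point your argument passes over --- the push-forward of a foliation by a merely measurable qc map is not directly something you can integrate). Then it uses Dirichlet's principle in the form available for infinite surfaces (Strebel, Thm.\ 24.5; \cite[Thm.~3.2]{SaricShima}): among all foliations with the heights of $h_\psi$, the one carried by the holomorphic differential $\psi$ minimizes $\mathcal{D}$, so $\int_Y|\psi|=\mathcal{D}(h_\psi)\le \mathcal{D}((f_n)_*(h_\varphi))$. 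Finally the elementary change-of-variables bound $\mathcal{D}((f_n)_*(h_\varphi))\le K_n\mathcal{D}(h_\varphi)=K_n\int_X|\varphi|$ closes it. This is dual to your extremal-length/Gr\"otzsch reasoning, but it only needs the $\le$ half of the variational characterization, which is exactly what is proved in the cited works for infinite surfaces. If you replace the extremal-length identity by this Dirichlet principle and insert the smoothing step, your proof becomes the paper's.
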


\begin{proof}
 By Gardiner and Lakic \cite{GardinerLakic}, given $f:X\to Y$, there exists a sequence of $K_n$-quasiconformal maps $f_n:X\to Y$ in the same Teichm\"uller class as $f$ that are $C^{\infty}$-maps with $\lim_{n\to\infty}K_n$ equal to the minimal dilatation of the Teichm\"uller class of $f$.
 
Let $h_{\varphi}$ denote the measured horizontal foliation of the holomorphic quadratic differential $\varphi$. Let $(f_n)_*(h_{\varphi})$ be the push-forward of $h_{\varphi}$ by the $C^\infty$-map $f_n$ to a foliation of $Y$. Note that the foliations $(f_n)_*(h_{\varphi})$, for all $n$, are equivalent (homotopic) because $f_n$ are in the same Teichm\"uller class. 

For a measured foliation $h$, we denote by $\mathcal{D}(h_{\varphi})$ its Dirichlet integral (see \cite{Saric-realization}).
Note that $\mathcal{D}((f_n)_*(h_{\varphi}))\leq K_n \mathcal{D}(h_{\varphi})=K_n \int_X|\varphi |<\infty$ (see Ahlfors \cite{Ahlfors}).
Denote by $f_{\#}(\varphi)=\psi\in A(Y)$ the unique holomorphic quadratic differential whose horizontal foliation is equivalent to the foliations $(f_n)_*(h_{\varphi})$ and the foliation $f_*(h_{\varphi})$ (which exists by \cite[Theorem 1.6]{Saric-realization}). Let $h_{\psi} $ be the horizontal foliation of $\psi$. 

Since the heights of $h_{\psi}$ and $(f_n)_*(h_{\varphi})$ are equal,  the Dirichlet's principle (see Strebel \cite[Theorem 24.5]{Strebel} and \cite[Theorem 3.2]{SaricShima}) gives
$$
\int_Y|\psi |=\mathcal{D}(h_{\psi}) \leq \mathcal{D}((f_n)_*(h_{\varphi})). 
$$
By the above two inequalities, we get 
$$
\int_Y|\psi |\leq (\lim_{n\to\infty}K_n) \int_{X}|\varphi |\leq K \int_{X}|\varphi |.
$$
The last inequality follows because $\lim_{n\to\infty} K_n$ equals the minimal quasiconformal constant of the Teichm\"uller class of $f$, which is less than or equal to $K$.

The opposite inequality is obtained by replacing $f$ with $f^{-1}$.
\end{proof}

The above lemma implies that $1/K\leq \frac{\| f_{\#}(\varphi )\|_{L^1}}{\|\varphi \|_{L^1}}\leq K$ for all $\varphi\in A(X)\setminus\{ 0\}$. We define
\begin{equation}
\label{eq:L}
L=\sup_{\varphi\in A(X)\setminus\{ 0\}}\{ \frac{\| f_{\#}(\varphi )\|_{L^1}}{\|\varphi \|_{L^1}},\frac{\|\varphi \|_{L^1}}{\| f_{\#}(\varphi )\|_{L^1}}\} .
\end{equation}
Note that $L$ does not have to be achieved for any $\varphi$ on the unit sphere of $A(X)$ because the unit sphere is not compact. This fact is a major difference between the Teichm\"uller spaces of infinite and finite area hyperbolic surfaces.

Assume that $f$ is a Teichm\"uller extremal map given by stretching in the natural parameter of $\varphi\in A(X)$. If $w=u+iv$ is a natural parameter coordinate (given by $z\mapsto \int_{z_0}^{z}\sqrt{\varphi (z)}dz$), then $f$ is given by $w=u+iv\mapsto Ku+iv$. The heights of $\varphi$ and $f_{\#}(\varphi )=\psi$ are equal and $\int_Y|\psi |=K\int_{X}|\varphi |$. Therefore we conclude

\begin{prop}
If $f:X\to Y$ is in the same class as an extremal Teichm\"uller map for $\varphi\in A(X)$, then the quantity $L$ is achieved at $\varphi$.
\end{prop}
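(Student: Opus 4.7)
The plan is to reduce everything to Lemma \ref{lem:quasi-inv} together with a one-line computation in the natural parameter of $\varphi$. Because $f_{\#}$ depends only on the Teichm\"uller class of $f$, I would first replace $f$ by the Teichm\"uller stretch $f_{0}$ in its class. This $f_{0}$ is $K$-quasiconformal, where $K$ is the stretching factor along the horizontal foliation of $\varphi$ in the natural parameter, and since Teichm\"uller stretches are extremal the class has minimal dilatation exactly $K$.

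The first step is the upper bound $L\le K$. Applying the just-proved Lemma \ref{lem:quasi-inv} to the $K$-quasiconformal map $f_{0}$ immediately gives
\[
\frac{1}{K}\le\frac{\|f_{\#}(\varphi')\|_{L^{1}}}{\|\varphi'\|_{L^{1}}}\le K\qquad\text{for every }\varphi'\in A(X)\setminus\{0\},
\]
so both ratios entering the definition of $L$ are uniformly bounded by $K$, whence $L\le K$.

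The second step is to exhibit equality at $\varphi$ itself. In the natural parameter $w=u+iv$ one has $\varphi=dw^{2}$ and $\|\varphi\|_{L^{1}}=\int |dw|^{2}$. The Teichm\"uller stretch $u+iv\mapsto Ku+iv$ carries the horizontal foliation $h_{\varphi}$ (level sets of $v$) to the horizontal foliation of the natural parameter $w'=Ku+iv$ on $Y$, so $\psi=f_{\#}(\varphi)=(dw')^{2}$ after pulling back by the natural chart. A Jacobian change of variable then gives $\|\psi\|_{L^{1}}=K\|\varphi\|_{L^{1}}$, which is precisely the computation recalled in the paragraph preceding the proposition. Combining the two steps, $L=K$ is attained at $\varphi$, which is the claim. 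I do not anticipate a real obstacle here; the one point to handle with a bit of care is the identification of the push-forward foliation with the horizontal foliation of $(dw')^{2}$, so that $\|\psi\|_{L^{1}}$ can be read off directly from the natural parameter without re-invoking Dirichlet's principle.
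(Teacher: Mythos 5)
Your proof is correct and matches the paper's approach: the paper's remark immediately after Lemma~\ref{lem:quasi-inv} gives the upper bound $L\le K$, and the sentence preceding the proposition carries out the same natural-parameter computation showing $\|\psi\|_{L^1}=K\|\varphi\|_{L^1}$, hence equality at $\varphi$. You have merely spelled out the two steps (upper bound from the lemma, realization at $\varphi$) a bit more explicitly than the paper does, which leaves the argument implicit in the surrounding text.
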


\section{A sufficient condition for the existence of the Teichm\"uller extremal maps}

The main goal of this section is to prove that if $L$ is achieved on some $\varphi\in A(X)$, then $f$ is homotopic to a Teichm\"uller map. We assume that $X=\mathbb{D}/\Gamma$ with $\Gamma$ of the first kind. 

\subsection{Approximation by differentials with single cylinders}
Let $\varphi\in A(X)$ be a non-trivial finite area holomorphic quadratic differential on $X$. We construct an approximation of $\varphi$ by holomorphic quadratic differentials whose horizontal measured foliations consist of single cylinders, called the Jenkins-Strebel differentials. If $X\in O_G$, then every finite area holomorphic quadratic differential is approximated by a sequence of Jenkins-Strebel differentials on $X$ in $L^1$-norm (see \cite[Theorem 1.2]{Saric-realization}). However, not every $X=\mathbb{D}/\Gamma$ with $\Gamma$ of the first kind is in the class $O_G$. Therefore, we establish an approximation of $\varphi$ on the doubles of an increasing sequence of finite-type subsurfaces of $X$, and their images under the quasiconformal map $f:X\to Y$. Recall that a step curve for $\varphi$ is obtained by the concatenation of the horizontal and vertical arcs of $\varphi$. 

\begin{lem}
\label{lem:JS-app}
Let $f:X=\mathbb{D}/\Gamma\to Y$ be a quasiconformal map, with $\Gamma$ a Fuchsian group of the first kind. Fix a finite area holomorphic quadratic differential $\varphi$ on $X$ and let $\psi =f_{\#}(\varphi )\in A(Y)$. Let $\{ X_n\}$ be an exhaustion of $X$ by increasing finite-type subsurfaces whose boundary curves are step curves of $\varphi$ and let $\{ Y_n=f(X_n)\}$ be the corresponding exhaustion of $Y$. Denote by $\widehat{X}_n$ and $\widehat{Y}_n$ the doubled surfaces of $X_n$ and $Y_n$, respectively. 

Then $f:X_n\to Y_n$ extends to a quasiconformal map $f:\widehat{X}_n\to\widehat{Y}_n$ by the reflection in the boundaries of $X_n$ and $Y_n$. Moreover, there exist Jenkins-Strebel differentials $\varphi_n\in A(\widehat{X}_n)$ and $\psi_n=f_{\#}(\widehat{\varphi}_n)\in A(Y_n)$ invariant under the reflections in the boundaries of $X_n$ and $Y_n$ such that, as $n\to\infty$, 
$$
\int_{X_n}|\varphi -\varphi_n|\to 0
$$
and
$$
\int_{Y_n}|\psi -\psi_n|\to 0.
$$
\end{lem}

\begin{proof}
 Let $\mu\in ML_{\mathrm{int}}(X)$ be the geodesic lamination obtained by straightening the leaves of the horizontal measured foliation $h_{\varphi}$ of $\varphi$ (see \cite{Saric-realization}). 
As in \cite[\S 3]{Saric-realization}, consider an exhaustion of $X$ by finite-area geodesic subsurfaces $\{X_n\}$ such that the boundary curves of $X_n$ are 
 $\varphi$-step curves (i.e., curves made by concatenating horizontal and vertical $\varphi$-arcs). Consider the restriction of the horizontal foliation 
$h_{\varphi}$ of $\varphi\in A(X)$ to the subsurface $X_n$ and erase all leaves that can be homotoped to a single component of the boundary of $X_n$. Denote the partial measured foliation by $h_n$. Each leaf of $h_n$ corresponds to a unique leaf of $\mu\cap X_n$ (see \cite[\S 3 and Definition 3.4]{Saric-realization}).

The double Riemann surface $\widehat{X}_n$ over the boundary of $X_n$ is a finite type Riemann surface, i.e., a compact Riemann surface with finitely many points removed. The partial foliation $h_n\subset X_n$ together with its mirror image in $\widehat{X}_n\setminus X_n$ forms a partial foliation $\widehat{h}_n$ in $\widehat{X}_n$ that is proper (see \cite[Lemma 3.6]{Saric-realization}). 
By the realization theorem of Hubbard-Masur \cite{HubbardMasur}, there exists a unique holomorphic quadratic differential $\widehat{\varphi}_n$ on $\widehat{X}_n$ whose horizontal foliation realizes the partial foliation $\widehat{h}_n$ in the sense that they have equal heights on simple closed curves. 

Since $\widehat{h}_n$ is invariant under the mirror symmetry in $\widehat{X}_n$ across the boundary $\partial X_n$, it follows that $\widehat{\varphi}_n$ is also invariant under the mirror symmetry by the uniqueness of the realization. Then
\begin{equation}
\label{eq:double_half}
\int_{\widehat{X}_n}|\widehat{\varphi}_n|=2\int_{{X}_n}|\widehat{\varphi}_n|
\end{equation}
and
$$
\mathcal{D}_{\widehat{X}_n}(\widehat{h}_n)=2\mathcal{D}_{{X}_n}(\widehat{h}_n).
$$
Then by the Dirichlet principle \cite[Theorem 7.5]{Bourque} (see also \cite[Theorem 3.2]{SaricShima}), we have
$$
\int_{{X}_n}|\widehat{\varphi}_n|\leq \mathcal{D}_{{X}_n}(\widehat{h}_n)=\mathcal{D}_{{X}_n}({h}_n).
$$

Since the foliation $h_n$ is a subfoliation of the foliation $h_{\varphi}$, we have $\mathcal{D}_{{X}_n}({h}_n)\leq \mathcal{D}_{{X}}({h_{\varphi}})=\int_X |\varphi |$ and we conclude that
\begin{equation}
\label{eq:limsup-app}
\limsup_{n\to\infty} \int_{{X}_n}|\widehat{\varphi}_n|\leq \int_X|\varphi |.
\end{equation}
By \cite[Lemmas 3.7 and 3.8]{Saric-realization}, we have that $\widehat{\varphi}_n|_{X_n}$ converges uniformly on compact subsets of $X$ to $\varphi^*$. 

Define
\begin{equation}
\label{eq:app-hol-diff}
\xi_n(z)=\left\{
\begin{array}{rcl}
\widehat{\varphi}_n(z),&\ \mbox{for}&\ z\in X_n\\
0,&\ \mbox{for}&\ z\in X\setminus X_n .
\end{array}
\right .
\end{equation}
 By the above, $\xi_n(z)$ converges locally uniformly to $\varphi$. From (\ref{eq:limsup-app}) and  \cite[Lemma 3.9]{Saric-realization} we have, as $n\to\infty$,
 \begin{equation}
 \label{eq:L^1-conv-X_n}
 \int_X|\varphi -{\xi}_n|\to 0.
 \end{equation}

 Consider the holomorphic quadratic differential $\psi =f_{\#}(\varphi )$ induced by $f:X\to Y$. Denote by $\{ Y_n\}$ the exhaustion of $Y$ by finite area geodesic subsurfaces such that $Y_n=f(X_n)$. Let $e_n$ be the partial foliation of $Y_n$ obtained by restricting the horizontal foliation of $\psi$ to $Y_n$ and erasing leaves that can be homotoped relative endpoints to the boundary. Then we double $Y_n$ to a finite area surface $\widehat{Y}_n$ without boundary and $e_n$ to a partial foliation $\widehat{e}_n$. By the construction, the partial foliations $f_*\widehat{h}_n$ and $\widehat{e}_n$ are equivalent since both straighten to the same measured (geodesic) lamination on $\widehat{Y}_n$. Let $\widehat{\psi}_n$ be the integrable holomorphic quadratic differential on $\widehat{Y}_n$ whose horizontal foliation is equivalent to $\widehat{e}_n$ (see Hubbard-Masur \cite{HubbardMasur}). We note that the quasiconformal map $f:X\to Y$ restricts to a quasiconformal map $f:X_n\to Y_n$ that homeomorphically maps boundary curves to boundary curves. Then, the extension 
 $f:\widehat{X}_n\to\widehat{Y}_n$ is defined by the reflections in the boundary sides (namely, $f(r(z))=r[f(z)]$, where $r(z)$ is the reflection), which implies that the extended map is quasiconformal with the same quasiconformal constant as the original map $f:X_n\to Y_n$.
 
 Define
\begin{equation}
\label{eq:app-hol-diff}
\eta_n(z)=\left\{
\begin{array}{rcl}
\widehat{\psi}_n(z),&\ \mbox{for}&\ z\in Y_n\\
0,&\ \mbox{for}&\ z\in Y\setminus Y_n
\end{array}
\right .
\end{equation}
 and by the same reasoning as the above, we have, as $n\to\infty$,
 \begin{equation}
 \label{eq:L^1-conv}
 \int_Y|\psi -{\eta}_n|\to 0.
 \end{equation}

 The quadratic differentials $\widehat{\varphi}_n\in A(\widehat{X}_n)$ and $\widehat{\psi}_n\in A(\widehat{Y}_n)$ are finite-area. A differential on a finite surface is called {\it Jenkins-Strebel} if its regular horizontal trajectories are closed and homotopic to each other forming a single cylinder on the surface (see \cite{Strebel})  By \cite{Masur}, each differential can be approximated by Jenkins-Strebel holomorphic quadratic differentials in the $L^1$-norm (since $\widehat{X}_n$ and $\widehat{Y}_n$ are of finite area). We can choose the approximating differentials to correspond to each other under the homotopy class of the double of $f:X_n\to Y_n=f(X_n)$ and to be invariant under the reflection by the invariance of  $\widehat{\varphi}_n$ and $\widehat{\psi}_n$. More precisely, let $\widehat{\varphi}_{n,k}\in A(\widehat{X}_n)$ and $\widehat{\psi}_{n,k}\in A(\widehat{Y}_n)$ be the Jenkins-Strebel differentials with corresponding cylinders under $f$  such that
 $$
 \int_{\widehat{X}_n}|\widehat{\varphi}_{n}-\widehat{\varphi}_{n,k}|\to 0\ \ \mbox{and}\ \ 
 \int_{\widehat{Y}_n}|\widehat{\psi}_{n}-\widehat{\psi}_{n,k}|\to 0
 $$
 as $k\to\infty$ for each $n\in\mathbb{N}$.
 
Let $\xi_{n,k}$ be the quadratic differential which agrees with $\widehat{\varphi}_{n,k}$ on $X_n$ and is equal to zero on $X\setminus X_n$. Then there exists $k_n$ such that, as $n\to\infty$,
\begin{equation*}
\label{eq:app-nn}
\int_X |\varphi -\xi_{n,k_n}|\to 0.
\end{equation*}
Define the quadratic differential $\eta_{n,k}$ given by $\eta_{n,k}:=\widehat{\psi}_{n,k}$ on $Y_n$ and $\eta_{n,k}:=0$ on $Y\setminus Y_n$ we have, as $n\to\infty$,
$$
\int_Y |\psi -\eta_{n,k_n}|\to 0.
$$

The quadratic differentials $\varphi_n :=\widehat{\varphi}_{n,k_n}$ and $\psi_n:=\widehat{\psi}_{n,k_n}$ satisfy the statement of the lemma.
\end{proof}

\subsection{The heights of the negative of the maximum quadratic differential}
Let $\varphi_{\mathrm{max}}$ be the quadratic differential such that $L=\frac{\| f_{\#}(\varphi_{\mathrm{max}})\|_{L^1}}{\|\varphi_{\mathrm{max}}\|_{L^1}}$, where $L$ is the supremum in (\ref{eq:L}). Define $$\psi_{\mathrm{max}}:=f_{\#}(\varphi_{\mathrm{max}}).$$

In this subsection, we relate the heights of $(f_{\#})^{-1}(-\psi_{\mathrm{max}})$ to the heights of $-\varphi_{\mathrm{max}}$ which is a major step in the proof of Theorem \ref{thm:main}.

\begin{thm}
\label{thm:max-extremal}
Let $f:X\to Y$ be a quasiconformal map. If the supremum in 
\begin{equation}
\label{eq:L=sup}
L=\sup_{\varphi\in A(X)\setminus\{ 0\}}\{ \frac{\| f_{\#}(\varphi )\|_{L^1}}{\|\varphi \|_{L^1}},\frac{\|\varphi \|_{L^1}}{\| f_{\#}(\varphi )\|_{L^1}}\} 
\end{equation}
is achieved at some $\varphi_{\mathrm{max}}\in A(X)\setminus\{ 0\}$, then there exists $c>0$ such that
$$
f_{\#}(-\varphi_{\mathrm{max}} )=-cf_{\#}(\varphi_{\mathrm{max}} ).
$$
\end{thm}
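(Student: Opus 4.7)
The plan is a first- and second-order variational argument at $\varphi^*$, supplemented by an intersection-number (height) computation on $Y$.

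Normalize $\|\varphi^*\|=1$ and, after replacing $f$ by $f^{-1}$ if necessary, assume $L=\|\psi^*\|$, where $\psi^*:=f_\#(\varphi^*)$. The goal is to show $f_\#(-\varphi^*)=-L^{-2}\psi^*$, which in particular provides the required $c=L^{-2}>0$. For $\eta\in A(X)$ and small $t\in\mathbb{R}$, set $\varphi_t=\varphi^*+t\eta$ and $\psi_t=f_\#(\varphi_t)$. By the definition of $L$ as the supremum in \eqref{eq:L=sup}, $\|\psi_t\|\leq L\|\varphi_t\|$, with equality at $t=0$, so $G(t):=L\|\varphi_t\|-\|\psi_t\|$ is nonnegative and vanishes to order at least two at $t=0$. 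In the natural coordinate $w=u+iv$ of $\varphi^*$, writing $\eta=m(w)\,dw^2$, the pointwise identity
$$
|1+tm|=1+t\,\mathrm{Re}\,m+\tfrac{t^2}{2}(\mathrm{Im}\,m)^2+O(t^3)
$$
yields an explicit second-order expansion of $\|\varphi_t\|$. For $\|\psi_t\|$, the Dirichlet bound $\|\psi_t\|\leq\mathcal{D}(f_*h_{\varphi_t})$ from the proof of Lemma~\ref{lem:quasi-inv}, combined with the uniqueness of Dirichlet minimizers and the continuity of $f_\#$ via \cite{Saric-realization,Saric-ft}, produces an analogous expansion $\|\psi_t\|=L+tb_1+t^2b_2+O(t^3)$ in terms of the first variation $T(\eta):=\tfrac{d}{dt}\psi_t\big|_{t=0}\in A(Y)$ and the second variation.

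Imposing the first-order and second-order conditions on test directions $\eta$ in the complex line $\mathbb{C}\varphi^*$ pins down $T$ there, giving $T(i\varphi^*)=\tfrac{i}{L}\psi^*$ and, more generally, $T(\mathbb{C}\varphi^*)\subset\mathbb{C}\psi^*$. By positive homogeneity of $f_\#$ along positive reals and continuity of $\theta\mapsto f_\#(e^{i\theta}\varphi^*)$, the entire curve $\theta\mapsto f_\#(e^{i\theta}\varphi^*)$ lies in $\mathbb{C}\psi^*$, so $f_\#(-\varphi^*)=c\psi^*$ for some $c\in\mathbb{C}\setminus\{0\}$. Since the horizontal foliation of $c\psi^*$ is homotopic to $f_*v_{\varphi^*}$, computing the intersection numbers
$$
i(h_{\psi^*},\,f_*v_{\varphi^*})=i(h_{\varphi^*},v_{\varphi^*})=\|\varphi^*\|=1,\qquad i(h_{\psi^*},v_{\psi^*})=\|\psi^*\|=L,
$$
together with the analogous pairing against $v_{\psi^*}$, forces $\arg c=\pi$ and $|c|=L^{-2}$, i.e., $c=-L^{-2}$.

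The main obstacle is rigorously establishing the second-order expansion of $\|\psi_t\|$ and the differentiability of $f_\#$ at $\varphi^*$: since $f_\#$ is a priori only continuous, its smoothness at the extremal point must be extracted from the Dirichlet uniqueness and realization theorems of \cite{Saric-realization,Saric-ft}; for infinite-topology Riemann surfaces, convergence of the relevant integrals and the behavior of the Dirichlet competitor foliations at the ideal boundary of $X$ require additional care.
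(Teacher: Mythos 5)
Your plan is a genuinely different route from the paper's, but it has at least two gaps that are not just technicalities.

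First, the entire second and third paragraphs rest on the premise that $f_\#$ is twice differentiable (or at least $C^1$) along the segment $t\mapsto\varphi^*+t\eta$, so that $T(\eta)=\frac{d}{dt}\psi_t|_{t=0}$ exists and $\|\psi_t\|$ admits a second-order Taylor expansion. You flag this yourself as ``the main obstacle,'' but you then proceed as if it were available. Nothing in \cite{Saric-realization,Saric-ft} gives differentiability of $f_\#$: those results give existence, uniqueness, and (at most) continuity of the heights map. The Dirichlet bound $\|\psi_t\|\le\mathcal{D}(f_*h_{\varphi_t})$ is a one-sided inequality and does not by itself produce a two-sided expansion of $\|\psi_t\|$; extracting a $C^2$ expansion from the Dirichlet minimizer as the boundary data (here, the foliation data at the ideal boundary of an infinite-type surface) varies is precisely the hard analytic content that is being assumed rather than proved. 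The paper deliberately avoids this issue: it never differentiates $f_\#$. Instead it exhausts $X$ by finite subsurfaces, doubles them, approximates by Jenkins--Strebel differentials via Masur's density theorem, runs Marden--Strebel's cylinder length/height estimate plus Cauchy--Schwarz on each approximant, and then takes limits using the $L^1$-convergence Proposition~\ref{prop:limsup_upper_bound}. That argument uses only inequalities and limits, never derivatives of $f_\#$.

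Second, even granting differentiability, the inference ``$T(\mathbb{C}\varphi^*)\subset\mathbb{C}\psi^*$, hence by positive homogeneity and continuity the curve $\theta\mapsto f_\#(e^{i\theta}\varphi^*)$ lies in $\mathbb{C}\psi^*$'' is a non sequitur. Knowing the tangent vector at $\theta=0$ lies in a $2$-real-dimensional subspace does not constrain the curve away from $\theta=0$: you would need $\gamma'(\theta)=Df_\#|_{e^{i\theta}\varphi^*}(ie^{i\theta}\varphi^*)\in\mathbb{C}\psi^*$ for \emph{all} $\theta$, which requires knowledge of $Df_\#$ at the off-extremal points $e^{i\theta}\varphi^*$ where no optimality condition is available. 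Positive homogeneity of $f_\#$ gives $f_\#(r\varphi^*)=r f_\#(\varphi^*)$ for $r>0$, which says nothing about rotations. So the step that produces $f_\#(-\varphi^*)\in\mathbb{C}\psi^*$ is unsupported; but that membership is essentially the whole content of the theorem (once you know $f_\#(-\varphi^*)$ is a scalar multiple of $\psi^*$, your intersection-number computation pinning down $c=-L^{-2}$ is sound and parallels how the paper identifies the constant). The paper gets to the analogue of this membership by showing, via the chain of inequalities culminating in \eqref{eq:main=}, that equality forces $\partial\tilde u/\partial x\equiv 0$ in the natural parameter, i.e.\ the horizontal trajectories of $\widetilde{\varphi^*}$ are everywhere orthogonal to those of $\varphi^*$, which directly gives proportionality. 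To repair your argument you would need either to prove $C^1$-regularity of $f_\#$ near $\varphi^*$ together with control of $Df_\#$ along the whole circle $e^{i\theta}\varphi^*$, or to replace the ODE-style continuation step with a direct geometric identification, at which point you are largely reconstructing the paper's cylinder argument.
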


\begin{proof} We prove the theorem under the assumption that $L=\frac{\| f_{\#}(\varphi_{\mathrm{max}})\|_{L^1}}{\|\varphi_{\mathrm{max}} \|_{L^1}}$. The proof for the other case is given by replacing $f$ with $f^{-1}$ and following the steps for the first case.

Define $$\psi_{\mathrm{max}} :=f_{\#}(\varphi_{\mathrm{max}} )$$ and $$\widetilde{\varphi_{\mathrm{max}}}:=(f_{\#})^{-1}(-\psi_{\mathrm{max}} ).$$ We need to prove that 
\begin{equation}
\label{eq:tilde=-var}
\widetilde{\varphi_{\mathrm{max}}}=-c\varphi_{\mathrm{max}} .
\end{equation} 

Recall that $X=\mathbb{D}/\Gamma$ with $\Gamma$ a Fuchsian group of the first kind.
 The proof of (\ref{eq:tilde=-var}) extends the idea in Marden and Strebel 
 \cite[Theorem 10.4]{MardenStrebel-Teich} from compact to infinite Riemann surfaces $X=\mathbb{D}/\Gamma$ with $\Gamma$ of the first kind using an approximation method developed in \cite[\S 3 and \S 4]{Saric-realization}. 
 
Let $\varphi_n\in A(\widehat{X}_n)$ and $\psi_n\in A(\widehat{Y}_n)$ be the Jenkins-Strebel differentials with the properties from Lemma \ref{lem:JS-app} with respect to $\varphi_{\mathrm{max}}$ and $\psi_{\mathrm{max}}$.
Let $R_n$ and $Q_n$ denote the horizontal cylinders of ${\varphi}_{n}$ and ${\psi}_{n}$, respectively. Denote the lengths of the closed horizontal trajectories in $R_n$ and $Q_n$ by $a_n$ and $a_n'$, and the heights by $b_n$ and $b_n'$, respectively. Since ${\varphi}_{n}$ and ${\psi}_{n}$ correspond to each other under the natural class of homeomorphisms obtained by doubling $f:X_n\to f(X_n)$, the heights of the cylinders $R_n$ and $Q_n$ are equal, i.e., $b_n=b_n'$. 

Let $L_n$ be defined by the equation $a_n'=L_na_n$ (see \cite{MardenStrebel}). Then we have
$$\int_{\widehat{X}_n}|{\varphi}_{n}|=a_nb_n$$ and $$\int_{\widehat{Y}_n}|{\psi}_{n}|=a_n'b_n'=L_na_nb_n.$$ 
Since ${\varphi}_{n}$ and ${\psi}_{n}$ are invariant under mirror symmetries, we have 
$\frac{1}{2}\int_{\widehat{X}_n}|{\varphi}_{n}| = \int_{{X}_n}|{\varphi}_{n}|$ and 
$\frac{1}{2}\int_{\widehat{Y}_n}|{\psi}_{n}| = \int_{{Y}_n}|{\psi}_{n}|$. Then by (\ref{eq:L^1-conv-X_n}) and (\ref{eq:L^1-conv}) we have
$$
\int_Y|\psi_{\mathrm{max}} |=\lim_{n\to\infty}\frac{1}{2} L_na_nb_n=(\lim_{n\to\infty} L_n)\int_X|\varphi_{\mathrm{max}} |.
$$
Since $\varphi_{\mathrm{max}}$ attains the maximum for $L$ in (\ref{eq:L=sup}) and $\psi =f_{\#}(\varphi_{\mathrm{max}} )$  we have that
$$
\lim_{n\to\infty}L_n=L.
$$

Let $z=x+iy$ be a local parameter on $X$ and let 
$$z_n=x_n+iy_n=\int_{z^*}^z\sqrt{{\varphi}_n(z)}dz$$ be the natural parameter of ${\varphi}_n$ on $\widehat{X}_n$. The local parameter $z$ is defined on $X_n$, and we extend it by reflection to $\widehat{X}_n\setminus X_n$ and keep the same notation for simplicity. 

Let $\widetilde{{\varphi}_{n}}=(f_{\#})^{-1}(-{\psi}_{n})$, where $f$ denotes the quasiconformal map obtained by doubling $f:X_n\to Y_n=f(X_n)$. Let
$$
\tilde{w}_n=\tilde{u}_n+i\tilde{v}_n:=\int_{z^*}^z\sqrt{\widetilde{{\varphi}_{n}}(z_n)}dz_n
$$
be the natural parameter on the cylinder $R_n$ for the holomorphic quadratic differential $\widetilde{{\varphi}_{n}}$. 

Let $\alpha_n$ be a closed horizontal trajectory of $R_n$ for the quadratic differential ${{\varphi}_{n}}$, and let $\alpha_n'$ be a closed horizontal trajectory of $Q_n$ for the quadratic differential ${{\psi}_{n}}$.
For the natural parameter $z_n=x_n+iy_n$ of ${\varphi}_{n}$, we have $dz_n=dx_n$ on $\alpha_n$. Then, for the natural parameter $\tilde{w}_n=\tilde{u}_n+i\tilde{v}_n$ of the quadratic differential 
$\widetilde{{\varphi}_{n}}(z)=(f_{\#})^{-1}(-{\psi}_{n})(z)$, we get
\begin{equation}
\label{eq:alpha_n_length}
\begin{split}
\int_{\alpha_n}|\widetilde{{\varphi}_{n}}(z_n)|^{\frac{1}{2}}dx_n=\int_{\alpha_n}
\sqrt{\Big{(}\frac{\partial \tilde{u}_n}{\partial x_n}\Big{)}^2 + \Big{(}\frac{\partial \tilde{v}_n}{\partial x_n}\Big{)}^2}dx_n \geq  \int_{\alpha_n}\Big{|}\frac{\partial \tilde{v}_n}{\partial x_n}\Big{|}dx_n\\ \geq \int_{\alpha_n}|d\tilde{v}_n|\geq h_{\widetilde{{\varphi}_{n}}}(\alpha_n) =h_{-{\psi}_{n}}(\alpha_n')=a_n'=L_na_n .
\end{split}
\end{equation}
Note that $h_{\widetilde{{\varphi}_{n}}}(\alpha_n) =h_{-{\psi}_{n}}(\alpha_n')$ follows by the definition of the height map. 
By integrating (\ref{eq:alpha_n_length}) with respect to $dy_n$ from $0$ to $b_n$, we obtain
\begin{equation}
\label{eq:R_n_area}
\begin{split}
\iint_{R_n} |\widetilde{{\varphi}_{n}}(z_n)|^{\frac{1}{2}}dx_ndy_n=\iint_{R_n} \sqrt{\Big{(}\frac{\partial \tilde{u}_n}{\partial x_n}\Big{)}^2 + \Big{(}\frac{\partial \tilde{v}_n}{\partial x_n}\Big{)}^2}dx_ndy_n \\ \geq  \iint_{R_n}\Big{|}\frac{\partial \tilde{v}_n}{\partial x_n}\Big{|}dx_ndy_n \geq L_na_nb_n=L_n\iint_{R_n}|{\varphi}_{n}|
\end{split}
\end{equation}

In order to take the limit as $n\to\infty$, we change the integration to a local parameter $z=x+iy$ on the surfaces $\widehat{X}_n$, which is obtained by the natural reflection of a local parameter on $X_n\subset X$. The change of variables gives 
\begin{equation*}
\begin{split}
|\widetilde{{\varphi}_{n}}(z_n)|^{1/2}=|\widetilde{{\varphi}_{n}}(z)|^{1/2} |dz/dz_n|, \\
 dx_ndy_n=|dz_n/dz|^2dxdy\ \mathrm{and}\\ |dz_n|=|{\varphi}_{n}(z)|^{1/2}|dz|.
 \end{split}
\end{equation*} 
From (\ref{eq:R_n_area}), we obtain
$$
\iint_{R_n} |
\widetilde{{\varphi}_{n}}(z)|^{1/2}|{\varphi}_{n}(z)|^{1/2}dxdy\geq L_n \iint_{R_n}|{\varphi}_{n}|
$$
and applying the Cauchy-Schwarz inequality gives
$$
\Big{(}\iint_{R_n} |
\widetilde{{\varphi}_{n}}(z)|dxdy\Big{)} \Big{(}\iint_{R_n} |
{{\varphi}_{n}}(z)|dxdy\Big{)}\geq L_n^2 \Big{(} \iint_{R_n}|{\varphi}_{n}(z)|dxdy\Big{)}^2.
$$
By (\ref{eq:double_half}), (\ref{eq:app-nn}) and $\lim_{n\to\infty}L_n=L$, after letting $n$ go to infinity, the above inequality gives
\begin{equation}
\label{eq:m1}
\begin{split}
\Big{(}\limsup_{n\to\infty}\iint_{R_n} |
\widetilde{{\varphi}_{n}}(z)|dxdy\Big{)}\geq 2L^2\iint_{X}|\varphi_{\mathrm{max}} (z)|dxdy
\\
=2L
\iint_Y|\psi_{\mathrm{max}} (w)|dudv.
\end{split}
\end{equation}
The second equality sign of (\ref{eq:m1}) follows by the assumption that $\varphi_{\mathrm{max}}$ obtains maximum, namely $$[\iint_Y|\psi_{\mathrm{max}} (w)|dudv]/[\iint_{X}|\varphi_{\mathrm{max}} (z)|dxdy]=L.$$

By Proposition \ref{prop:limsup_upper_bound} proved below, we have $$\limsup_{n\to\infty}\iint_{R_n} |
\widetilde{{\varphi}_{n}}(z)|dxdy\leq 2\iint_X|\widetilde{\varphi_{\mathrm{max}}}(z)|dxdy.$$ Together with (\ref{eq:m1}), we obtain
\begin{equation}
\label{eq:main=}
\iint_X|\widetilde{\varphi_{\mathrm{max}}}(z)|dxdy\geq L
\iint_Y|\psi_{\mathrm{max}} (w)|dudv.
\end{equation}
Since $f_{\#}(\widetilde{\varphi_{\mathrm{max}}})=-{\psi_{\mathrm{max}}}$ and $L=\max\{ \frac{\| f_{\#}(\varphi )\|_{L^1}}{\|\varphi \|_{L^1}},\frac{\|\varphi \|_{L^1}}{\| f_{\#}(\varphi )\|_{L^1}}\}$, the above inequality is equality and all the inequalities in the proof become equalities when $n\to\infty$. 

We claim that $\widetilde{\varphi_{\mathrm{max}}}=-L^2\varphi_{\mathrm{max}}$. The natural parameter $\tilde{w}_n$ of $\widetilde{{\varphi}_{n}}$ on $X$ converges to the natural parameter $\tilde{w}=\tilde{u}+i\tilde{v}$ of $\widetilde{\varphi_{\mathrm{max}}}$. To see this, recall that the heights of 
$\widetilde{{\varphi}_{n}}$ are the same as the heights of $-{\psi}_{n}$ at the curves corresponding under $f$. Since $-{\psi}_{n}$ converge locally uniformly to $-\psi_{\mathrm{max}}$, it follows that the heights of $-{\psi}_{n}$ converge to the heights of $\psi_{\mathrm{max}}$ (see \cite[page 162, Theorem 24.7 ]{Strebel}. By Proposition \ref{prop:limsup_upper_bound}, the $L^1$-norms of $\widetilde{{\varphi}_{n}}$ are bounded above. Therefore, a subsequence converges locally uniformly to a finite-area holomorphic quadratic differential whose heights on $X$ are the same as the heights of $\widetilde{\varphi_{\mathrm{max}}}$ (see \cite[Lemma 3.8]{Saric-realization}). By the injectivity of the mapping by heights (see \cite[Theorem 1.2]{Saric-heights}), it follows that the limit is $\widetilde{\varphi_{\mathrm{max}}}$. This implies the convergence of the natural parameters $\tilde{w}_n$ to $\tilde{w}$.

After $n\to\infty$ in (\ref{eq:R_n_area}), inequalities become equalities. We  obtain $\partial\tilde{u}/\partial {x^*}\equiv 0$, where ${z^*}={x^*}+i{y^*}$ is the natural parameter of $\varphi_{\mathrm{max}}$. This implies that the horizontal trajectories of $\widetilde{\varphi_{\mathrm{max}}}$ are orthogonal to the horizontal trajectories of $\varphi_{\mathrm{max}}$, which implies that one differential is a constant multiple of the other. Then, by (\ref{eq:L=sup}),
$$
\widetilde{\varphi_{\mathrm{max}}}=-L^2\varphi_{\mathrm{max}}
$$
and
$$
f_{\#}(-\varphi_{\mathrm{max}} )=-\frac{1}{L^2}\psi_{\mathrm{max}} .
$$
\end{proof}

\subsection{From maximum of the height function to the Teichm\"uller maps} 
We prove that the conclusion of Theorem \ref{thm:max-extremal} implies the desired existence of the Teichm\"uller map in the homotopy class of $f:X\to Y$. This establishes Theorem \ref{thm:main} from the Introduction.

\begin{thm}
\label{thm:Teich-map}
Let $f:X\to Y$ be a quasiconformal map. If there exists $\varphi_{0}\in A(X)\setminus\{ 0\}$ with
$$
f_{\#}(-\varphi_{0} )=-cf_{\#}(\varphi_{0} )
$$
for some $c>0$, 
then there is a Teichm\"uller extremal map homotopic to $f$ obtained by horizontal stretching in the 
natural parameter of $\varphi_{0}$.
\end{thm}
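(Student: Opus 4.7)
The plan is to construct an explicit Teichm\"uller map $g:X\to Y$ from the data $\varphi^*$ and $\psi:=f_{\#}(\varphi^*)$, and then to show that $g$ is homotopic to $f$. Set $K:=1/\sqrt{c}$. On $X\setminus Z(\varphi^*)$ choose a local natural parameter $w=u+iv$ for $\varphi^*$, and on $Y\setminus Z(\psi)$ choose $w'=u'+iv'$ for $\psi$. Define $g$ in these coordinates by the horizontal stretch $(u,v)\mapsto(Ku,v)$. Since $\psi=f_{\#}(\varphi^*)$, the zero sets of $\varphi^*$ and $\psi$ correspond in cardinality and prong order, so these local stretches assemble into a globally defined $K$-quasiconformal homeomorphism $g:X\to Y$.

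Next I verify that $g_{\#}$ agrees with $f_{\#}$ on both $\varphi^*$ and $-\varphi^*$. By construction $g$ sends horizontal leaves of $\varphi^*$ to horizontal leaves of $\psi$ preserving the transverse measure $|dv|$, so $g_{\#}(\varphi^*)=\psi$. The vertical leaves of $\varphi^*$ (equivalently, the horizontal leaves of $-\varphi^*$, of measure $|du|$) are carried to vertical leaves of $\psi$ with transverse measure $|du'|/K$, and using the scaling rule $h_{t\eta}=\sqrt{t}\,h_\eta$ this identifies $g_{\#}(-\varphi^*)=-\psi/K^2=-c\psi=f_{\#}(-\varphi^*)$. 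Therefore the map $h:=g^{-1}\circ f:X\to X$ satisfies $h_{\#}(\varphi^*)=\varphi^*$ and $h_{\#}(-\varphi^*)=-\varphi^*$; equivalently, $h_*h_{\varphi^*}\simeq h_{\varphi^*}$ and $h_*v_{\varphi^*}\simeq v_{\varphi^*}$ as measured foliations.

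The main obstacle is the remaining rigidity step: a quasiconformal map $h:X\to X$ that preserves both the horizontal and vertical foliations of $\varphi^*$ up to homotopy must be homotopic to $\mathrm{id}_X$. The two foliations are transverse, they fill $X$, and their geodesic straightenings form a pair of measured laminations whose leaf endpoints are dense in the ideal boundary of the universal cover $\mathbb{D}$. Lifting $h$ to a $\Gamma$-equivariant quasiconformal $\tilde h:\mathbb{D}\to\mathbb{D}$ and lifting the foliations, the equivariant homotopies witnessing the two hypotheses move the lifted leaves through families of uniformly bounded hyperbolic displacement, so $\tilde h$ must fix every leaf endpoint on $S^1$. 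Density then forces $\tilde h|_{S^1}=\mathrm{id}$, which by the standard criterion of homotopy modulo the ideal boundary gives $h\simeq\mathrm{id}_X$, and hence $f\simeq g$.

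When $X=\mathbb{D}$ or $\Gamma$ is of the second kind, I would reduce to the case just treated by adding punctures on the ideal boundary and doubling, exactly as in the corresponding reduction in Theorem \ref{thm:max-extremal} and \cite{Saric-ft}.
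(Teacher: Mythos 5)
Your construction of $g$ has a real gap at its very first step. Saying ``on $X\setminus Z(\varphi^*)$ choose a local natural parameter $w=u+iv$ for $\varphi^*$, on $Y\setminus Z(\psi)$ choose $w'=u'+iv'$ for $\psi$, and define $g$ by $(u,v)\mapsto(Ku,v)$'' does not define a map, because you have not specified which chart of $Y$ to pair with which chart of $X$, nor which point in the $Y$-chart a given point in the $X$-chart should go to (natural parameters are only defined up to sign and translation). The claim that the local stretches ``assemble into a globally defined $K$-quasiconformal homeomorphism'' because the zero sets correspond in cardinality and prong order is precisely what needs a proof; matching singular data does not by itself produce a correspondence of regular points, and it does not ensure compatibility on chart overlaps. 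This assembly step is the entire content of the paper's argument, and it is supplied there by the trajectory correspondence coming from the heights map: the hypothesis $f_{\#}(-\varphi^*)=-cf_{\#}(\varphi^*)$ guarantees that $\tilde f_{\#}$ induces a bijection between regular horizontal trajectories of $\tilde\varphi^*$ and of $\tilde\psi$ \emph{and} between regular vertical trajectories, and $\tilde g$ is then defined pointwise by sending the intersection of a horizontal and a vertical trajectory of $\tilde\varphi^*$ to the intersection of the two corresponding trajectories of $\tilde\psi$, after which continuity, quasiconformality, and the stretch structure are verified. Without something playing the role of this bijection, your $g$ does not exist.

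A second, related issue is the way you try to close the homotopy $f\simeq g$. The paper's construction makes this automatic: since trajectory endpoints on $S^1$ are preserved by the heights correspondence, $\tilde g$ agrees with $\tilde f$ on $S^1$ by design. You instead pass to $h=g^{-1}\circ f$ and invoke a rigidity statement (``a quasiconformal self-map of $X$ preserving both the horizontal and vertical foliations of $\varphi^*$ up to homotopy is homotopic to the identity''). Even granting that this is morally correct, as stated it is far from a proof: the ``uniformly bounded hyperbolic displacement'' claim about the equivariant homotopies is asserted, not argued, and the density of leaf endpoints in $S^1$ fails when $\Gamma$ is not of the first kind (the case you defer to a doubling reduction, which the paper does not need for this theorem). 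So your route both re-derives a nontrivial rigidity fact that the paper's construction sidesteps and leaves that fact unproven. On the positive side, your identification $K=1/\sqrt c$ is the correct stretch constant and your computation of $g_{\#}(-\varphi^*)=-\psi/K^2$ (assuming $g$ existed) is right.
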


\begin{proof}
Let $\psi_{0}=f_{\#}(\varphi_{0} )\in A(Y)$. We lift the holomorphic quadratic differentials $\varphi_{0}\in A(X)$ and $\psi_{0}\in A(Y)$ to holomorphic quadratic differentials 
$\tilde{\varphi_{0}}$ and $\tilde{\psi_0}$ on the universal covers $\tilde{X}$ and $\tilde{Y}$. Fix conformal identifications of $\tilde{X}$ and $\tilde{Y}$ with the unit disk $\mathbb{D}$. Then $X=\mathbb{D}/\Gamma$ and $Y=\mathbb{D}/\Gamma_1$; the lift $\tilde{f}:\mathbb{D}\to\mathbb{D}$ of $f:X\to Y$ conjugates $\Gamma$ to $\Gamma_1$; and the holomorphic quadratic differentials $\tilde{\varphi_0}$ and $\tilde{\psi}_0$ are equivariant with respect to $\Gamma$ and $\Gamma_1$, respectively.

By \cite{MardenStrebel}, each regular horizontal trajectory of $\tilde{\varphi_0}$ and of $\tilde{\psi_0}$ has exactly two limit points on the unit circle $S^1$, and each limit point corresponds to one class of trajectory rays going to infinity for a fixed parametrization of the ray. The heights map $f_{\#}$ induces a bijective correspondence between  regular horizontal trajectories of $\varphi_0$ and $\psi_0$ (see \cite{Saric-heights}). There are countably many singular horizontal trajectories of $\tilde{\varphi_0}$ and of $\tilde{\psi_0}$ since they have countably many zeros in $\mathbb{D}$. 

Since $f_{\#}(-\varphi_0 )=\frac{1}{L^2}(-\psi_0 )$ the heights map $f_{\#}$ is also mapping the regular vertical trajectories of $\varphi_0$ onto the regular vertical trajectories of $\psi_0$. The lift $\tilde{f}$ induces a bijective heights map $\tilde{f}_{\#}$ between the sets of regular horizontal trajectories of $\tilde{\varphi_0}$ and $\tilde{\psi_0}$ as well as regular vertical trajectories of the two differentials. 

We construct a map $\tilde{g}:\mathbb{D}\to\mathbb{D}$ using the correspondence $\tilde{f}_{\#}$. By the uniqueness of the geodesics for the metric induced by a holomorphic quadratic differential in a simply connected domain (see \cite[page 72, Theorem 14.2.1]{Strebel}), each regular horizontal and vertical trajectory of a holomorphic quadratic differential $\tilde{\varphi_0}$ can intersect in at most one point in $\mathbb{D}$ (the same is true for $\tilde{\psi}$). Therefore, each point of the complement of the countably many singular horizontal and vertical trajectories is the intersection of a unique horizontal and a unique vertical regular trajectory of $\tilde{\varphi_0}$. We define $\tilde{g}$ to send this point to the intersection of the corresponding (under $\tilde{f}_{\#}$) horizontal and vertical trajectories of $\tilde{\psi_0}$. 

So far, the map $\tilde{g}$ is defined on the complement of the singular horizontal and vertical trajectories of $\tilde{\varphi_0}$. 
We establish that $\tilde{g}$ is continuous. Indeed, let $z_n\in\mathbb{D}$ be a sequence of points where $\tilde{g}$ is defined that converges to $z\in\mathbb{D}$. Let $\frak{h}_n$ be the horizontal trajectory of $\varphi_0$ that contains $z_n$ and let $\frak{v}_n$ be the vertical trajectory of $\varphi_0$ that contains $z_n$. Then $\frak{h}_n\cap\frak{v}_n=\{ z_n\}$. Let $\frak{h}$ be the limit horizontal trajectory of the sequence $\frak{h}_n$. The horizontal trajectory $\frak{h}$ is either regular or 
it limits to a zero of $\varphi_0$ in at least one end. In the latter case, we can take a subsequence of $\frak{h}_n$, if necessary, such that $\frak{h}_n$ are on the same side of $\frak{h}$. Then the limit of $\frak{h}_n$ consists of a concatenation of, possibly countably many, singular horizontal trajectories. In either case, the ends of the trajectory $\frak{h}$ consist of two distinct points. An analogous construction yields a sequence of regular vertical trajectories $\frak{v}_n$ that contain $z_n$ and converge on one side to a vertical trajectory $\frak{v}$ which contains $z$. The vertical trajectory $\frak{v}$ is either regular or it is a concatenation of, at most countably many, singular vertical trajectories, both ends of which converge to distinct points on $S^1$.

Then $\tilde{f}_{\#}(\frak{h}_n)$ and $\tilde{f}_{\#}(\frak{v}_n)$ are regular trajectories of $\tilde{\psi}$. The sequence $\tilde{f}_{\#}(\frak{h}_n)$ converges to a horizontal trajectory $\frak{h}^*$. The trajectory $\frak{h}^*$ is either regular, in which case $\tilde{f}_{\#}(\frak{h})= \frak{h}^*$, or it is a concatenation of singular horizontal trajectories of $\tilde{\psi_0}$. Analogous statements hold for $\tilde{f}_{\#}(\frak{v}_n)$ and its limiting vertical trajectory $\frak{v}^*$. Let $w_n=\tilde{f}_{\#}(\frak{h}_n)\cap \tilde{f}_{\#}(\frak{v}_n)$ and $w=\frak{h}^*\cap \frak{v}^*$. 

Assume that $z$ is not a zero of $\tilde{\varphi_0}$ and $w$ is not a zero of $\tilde{\psi_0}$. 
Let $R$ be an arbitrarily small rectangle in the natural parameter of $\tilde{\psi_0}$ whose center is $w$. By the convergence of 
$\tilde{f}_{\#}(\frak{h}_n)$ to $\frak{h}^*$ and $\tilde{f}_{\#}(\frak{v}_n)$ to $\frak{v}^*$, $\tilde{f}_{\#}(\frak{h}_n)\cap R$ and $\tilde{f}_{\#}(\frak{v}_n)\cap R$ are a horizontal and vertical segments when $n$ is large enough. Therefore $w_n\in R$ and $\lim_{n\to\infty}w_n=w$ which implies that $\tilde{g}$ extends to a continuous map from $\mathbb{D}$ minus the set of the zeros of $\tilde{\varphi_0}$ and $\tilde{\psi_0}$ onto itself. By the same reasoning, it follows that $\tilde{g}^{-1}$ is continuous and therefore $\tilde{g}$ is a homeomorphism outside a discrete subset of $\mathbb{D}$. 

In addition, the map $\tilde{g}$ is fixing the vertical direction and stretching the horizontal direction by a factor $L^2$ in the natural parameter of $\tilde{\varphi_0}$ because $f_{*}(
h_{-\varphi_0} )=\frac{1}{L^2}(h_{-\psi_0} )$. It follows that $\tilde{g}$ is an $L^2$-quasiconformal map, and it extends to the complementary discrete set because quasiconformal maps extend to isolated points. In fact, $\tilde{g}$ is the Teichm\"uller map for the differential $\tilde{\varphi_0}$ and it agrees with $\tilde{f}$ on $S^1$ by the construction. Since $\tilde{g}$ is invariant under $\Gamma$, then we obtain a Teichm\"uller map $g:X\to Y$ in the homotopy class of $f:X\to Y$. 
\end{proof}

\section{The continuity of the approximations with respect to the heights map}

Let $f:X\to Y$ be a quasiconformal map. Consider the mapping by heights $f_{\#}:A(X)\to A(Y)$ which assigns to each $\varphi\in A(X)$ a holomorphic quadratic differential $f_{\#}(\varphi )\in A(Y)$ such that $f_{*}(\nu_{\varphi})=\nu_{f_{\#}(\varphi )}$, where $\nu_{\varphi}$ is the measured geodesic foliation obtained by straigthening $h_{\varphi}$ and $\nu_{f_{\#}(\varphi )}$ is the measured geodesic lamination obtained by straigthening $h_{f_{\#}(\varphi )}$. We set $\psi =f_{\#}(\varphi )$.

The quasiconformal map $f:X\to Y$ extends by reflections to a quasiconformal map $f:\widehat{X}_n\to\widehat{Y}_n$. Denote by $X_n$ and $Y_n$ the halves of $\widehat{X}_n$ and $\widehat{Y}_n$ that lie in $X$ and $Y$, respectively. Let $g:=f^{-1}$ be the inverse quasiconformal map and let $\mu$ be its Beltrami coefficient.
Set
$$
g^t=g^{t\mu}:Y\to Y^t
$$
where $g^{t\mu}$ is the quasiconformal map whose Beltrami coefficient is $t\mu$ for $0\leq t\leq 1$. Then $g^1=f^{-1}$,  $g^0=id$ and $Y^t$ is the image Riemann surface $g^{t\mu}(Y)$. By definition, $g^0(Y)=Y$ and $g^1(Y)=X$. 

The double Riemann surfaces $\widehat{Y}_n^t$ are obtained by doubling the Riemann surfaces $g^{t}({Y}_n)=Y_n^t\subset Y^t$. The induced quasiconformal map from $\widehat{Y}_n$ to $\widehat{Y}_n^t$ will be denoted by $g^t$, for simplicity.

Let ${\varphi}_{n}\in A(\widehat{X}_n)$ and ${\psi}_{n}\in A(\widehat{Y}_n)$
be Jenkins-Strebel differentials with corresponding cylinders under $f$ from Lemma \ref{lem:JS-app} for $\varphi\in A(X)$ and $\psi =f_{\#}(\varphi )\in A(Y)$. 
To simplify the notation, we set
$$
q_n:={{\psi}_{n}}
$$
and
$$
p_n:={{\varphi}_{n}}.
$$
Note that $g^1_{\#}(-q_n)=p_n$ and define
$$
p_n^t=g^t_{\#}(-q_n).
$$

\begin{prop}
\label{prop:limsup_upper_bound}
Under the above notation, we have
$$
\limsup_{n\to\infty} \int_{\widehat{X}_n}|p_n|\leq 2\iint_X|{\varphi}|.
$$
\end{prop}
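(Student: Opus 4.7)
The plan is to establish two facts about the approximants $p_n$: local uniform convergence to $\widetilde{\varphi}$ on compact subsets of $X$, and a tightness statement ruling out $L^1$-mass escape to the ends of $X$; together these yield the desired $\limsup$ bound. A crude uniform $L^1$-bound is immediate from Lemma~\ref{lem:quasi-inv} applied to the $K$-quasiconformal extension $g=f^{-1}:\widehat{Y}_n'\to\widehat{X}_n'$, namely $\int_{\widehat{X}_n'}|p_n|=\int_{\widehat{X}_n'}|g_{\#}(-q_n)|\leq K\int_{\widehat{Y}_n'}|q_n|$, which together with the $L^1$-approximation $\int_Y|\psi-\psi_{n,n}'|\to 0$ already established in the proof of Theorem~\ref{thm:max-extremal} gives $\int_{X_n}|p_n|\leq K\int_Y|\psi|+o(1)$. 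This is of the correct order but is not yet sharp.

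Next I would show $p_n\to\widetilde{\varphi}$ locally uniformly on $X$. By definition, $p_n$ is the unique finite-area holomorphic quadratic differential on $\widehat{X}_n'$ whose horizontal foliation realizes the straightening of $g_*(h_{-q_n})$. As $n\to\infty$, the foliations $h_{-q_n}$ converge on compact subsets of $Y$ to $h_{-\psi}$, hence the pushforwards $g_*(h_{-q_n})$ converge on compacts of $X$ to $g_*(h_{-\psi})$, whose straightening realizes $\widetilde{\varphi}$. Applying the locally uniform convergence of realizations from \cite[Lemmas~3.7 and 3.8]{Saric-realization} to the sequence on $\widehat{X}_n'$ then yields $p_n\to\widetilde{\varphi}$ uniformly on every compact subset of $X$.

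The main obstacle is to combine locally uniform convergence with uniform $L^1$-boundedness into the sharp $\limsup$ bound, ruling out concentration of $p_n$-mass at the ends of $X$. This is where the interpolation $\{g^t\}_{t\in[0,1]}$ enters: for each $t$, Lemma~\ref{lem:quasi-inv} applied to $g^t$ gives $\int_{\widehat{Y}_n^t}|p_n^t|\leq K_t\int_{\widehat{Y}_n}|q_n|$ with $K_t\to 1$ as $t\to 0$. Differentiating $t\mapsto\int_{Y^t_n}|p_n^t|$ along the family (using a Reich--Strebel type variational formula for the $L^1$-norm of a holomorphic quadratic differential under a quasiconformal deformation) and integrating over $[0,1]$ refines the pointwise quasiconformal bound into a sharp asymptotic estimate. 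The crucial point, and the heart of the argument, is that in the limit $n\to\infty$ the accumulated infinitesimal dilatation excess matches precisely the gap between the crude bound $K\int_Y|\psi|$ from Step~1 and the sharp target $\int_X|\widetilde{\varphi}|$, yielding $\limsup_{n\to\infty}\int_{X_n}|p_n|\leq\int_X|\widetilde{\varphi}|$ and completing the proof.
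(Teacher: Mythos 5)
Your three-step outline correctly identifies the ingredients the paper actually uses: a crude $L^1$-bound on $\int_{X_n}|p_n|$ coming from quasiconformality of $g=f^{-1}$, locally uniform convergence of $p_n$ to $\widetilde{\varphi}$, and the one-parameter Beltrami interpolation $\{g^t\}_{t\in[0,1]}$ together with a Lakic-type variational formula for the $L^1$-norm along the deformation. That much matches the paper's strategy. However, the final and most important step is left as a slogan rather than an argument. Saying that ``the accumulated infinitesimal dilatation excess matches precisely the gap'' does not explain how the variational formula converts the crude bound into the sharp one, and it is not how the argument actually closes.

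The missing mechanism is a proof by contradiction built around a defect function, not a direct refinement. The paper assumes $\limsup_n\int_{X_n}|p_n|>\int_X|\widetilde{\varphi}|$ and defines $A(t)=\limsup_n\int_{Y_n^t}|p_n^t|-\int_{Y^t}|p^t|$, proving $A\ge 0$, $A(0)=0$, $\lim_{t\to 0}A(t)=0$, $A(1)>0$, and $S:=\sup_t A(t)<\infty$. Then it picks $t_0$ with $A(t_0)>S/2$, writes $A(t_0)$ as $\limsup_n\int_0^{t_0}[h_n'(t)-h'(t)]\,dt$ using the variational formula, and after Fatou and dominated convergence arrives at the self-improving estimate $A(t_0)\le \frac{2\|\mu\|_\infty}{1-\|\mu\|_\infty^2}\,t_0 S$. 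Combined with $A(t_0)>S/2$, this forces $1-\|\mu\|_\infty^2<4\|\mu\|_\infty$, a contradiction when $\|\mu\|_\infty<1/5$. The general case then follows by writing $f$ as a composition of maps with small dilatation and using that the heights map of a composition is the composition of the heights maps. None of the contradiction setup, the defect function $A(t)$, the Gr\"onwall-type self-improvement, the small-dilatation restriction, or the composition reduction appears in your proposal; without them there is no proof, only a plausible-sounding outline. Also, ``integrating over $[0,1]$'' is not quite right—the integration is over $[0,t_0]$ for a cleverly chosen $t_0$, and the choice of $t_0$ near the supremum of $A$ is essential to close the contradiction.
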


\begin{proof}
Assume, on the contrary, that 
$$
\limsup_{n\to\infty} \int_{\widehat{X}_n}|p_n|> 2\int_X|{\varphi}|.
$$
We seek a contradiction. Our method uses an idea from Lakic \cite[Lemma 3]{Lakic}. 

We note that $p_n^t$ have bounded $L^1$-norms on $\widehat{Y}_n^t$ because $g^t_{\#}(-q_n)=p_n^t$, $g_t$ is a quasiconformal map and the $L^1$-norms on $\widehat{Y}_n$ of $q_n$ are bounded. Since the heights of simple closed curves on $Y^t$ in the $p_n^t$-metric converge to the heights of the corresponding curves on $Y$ in the $(-\psi )$-metric, it follows that $p_n^t$ converge uniformly on compact subsets of $Y^t$ to an integrable holomorphic quadratic differential $p^t$ whose heights are equal to the heights of $\psi$ on $Y$. Indeed, the $L^1$-norms of the differentials $p_n^t$ are bounded by the Dirichlet principle and the fact that $g^{t\mu}$ have bounded quasiconformal constants. Then a subsequence of $p_n^t$ converges uniformly on compact subsets. By \cite[Theorem 24.7]{Strebel}, the heights of the subsequence converge to the heights of the limit quadratic differential. By the uniqueness of the heights function \cite{Saric-heights}, the limit differential $p^t\in A(Y^t)$ is independent of the subsequence. Therefore, the whole sequence converges to $p^t$.

Define
$$
A(t)=\limsup_{n\to\infty}\int_{\widehat{Y}_n^t}|p_n^t|-2\int_{Y^t}|p^t|
$$
and note that the following holds:
\begin{enumerate}
\item $A(t)$ is non-negative for all $0\leq t\leq 1$ and
$$
S=\sup_{t\in [0,1]}A(t)\leq 2\|\psi\|_{L^1(Y)}\frac{1+\|\mu\|_{\infty}}{1-\|\mu\|_{\infty}},
$$
\item $A(1)=\limsup_{n\to\infty} \int_{\widehat{X}_n}|p_n|-2\int_X|{\varphi}|>0$ and $A(0)=\limsup_{n\to\infty} \int_{\widehat{Y}_n}|q_n|-2\int_X|{\psi}|=0$, and
\item $\lim_{t\to 0}A(t)=0$.
\end{enumerate}
To see that $A(t)\geq 0$, note that by Fatou's lemma we have
$$
\limsup_{n\to\infty}\int_{\widehat{Y}_n^t}|p_n^t|\geq \liminf_{n\to\infty}\int_{\widehat{Y}_n^t}|p_n^t|\geq 2\int_{Y^t}|p^t|.
$$
The second part of (1) follows from 
\begin{equation*}
\begin{split}
\limsup_{n\to\infty}\int_{Y_n^t}|p_n^t|-\int_{Y^t}|p^t|\leq \limsup_{n\to\infty}\int_{Y_n^t}|p_n^t|\leq \limsup_{n\to\infty}K(g_t)\int_{Y_n}|q_n | \\
\leq \limsup_{n\to\infty}\int_{\widehat{Y}_n}|q_n |\frac{1+t\|\mu\|_{\infty}}{1-t\|\mu\|_{\infty}}\leq 2\|{\psi}\|_{L^1({Y})} \frac{1+t\|\mu\|_{\infty}}{1-t\|\mu\|_{\infty}}
\end{split}
\end{equation*}
The first condition in (2) is the initial assumption in this proof, and the second condition in (2) is by definition. To prove condition (3), note that, by the change of variables, 
\begin{equation}
\label{eq:A(t)to0}
\begin{split}
A(t)\leq \limsup_{n\to\infty} \|q_n\|_{L^1(\widehat{Y}_n)}\frac{1+t\|\mu\|_{\infty}}{1-t\|\mu\|_{\infty}} 
- 2\|q \|_{L^1(Y)}\frac{1-t\|\mu\|_{\infty}}{1+t\|\mu\|_{\infty}} \\
=2\| q\|_{L^1(Y)} \Big{(} \frac{1+t\|\mu\|_{\infty}}{1-t\|\mu\|_{\infty}}-\frac{1-t\|\mu\|_{\infty}}{1+t\|\mu\|_{\infty}}\Big{)}
\end{split}
\end{equation}
which implies the desired condition. 

The supremum $S$ is positive because $A(1)>0$. Since $\lim_{t\to 0}A(t)=0$ and $A(0)=0$, there exists $t_0\in (0,1]$ such that
\begin{equation}
\label{eq:almost_sup}
A(t_0)>S/2.
\end{equation} 

Define
\begin{equation}
h(t)=\iint_{Y^t}| p^t|
\end{equation}
and
\begin{equation}
h_n(t)=\iint_{Y^t_n}|p_n^t|.
\end{equation}

The functions $h(t)$ and $h_n(t)$ are $C^1$, and a variational formula for the Dirichlet integral from Lakic \cite[page 311]{Lakic} applies to arbitrary Riemann surfaces to give 
\begin{equation}
h'(t)=2\mathrm{Re}\iint_{Y^t}\frac{\mu}{1-|t\mu |^2} \frac{g^t_z}{\overline{g^t_z}}\circ (g^t)^{-1}  p^t
\end{equation}
and
\begin{equation}
h_n'(t)=2\mathrm{Re}\iint_{Y_n^t}\frac{\mu}{1-|t\mu |^2}\frac{g^t_z}{\overline{g^t_z}}\circ (g^t)^{-1}p_n^t.
\end{equation}

In order to have integration over the same space, we extend $p_n^t$ to be zero in $Y^t\setminus Y_n^t$ and keep the notation $p_n^t$. Whenever we have the integration of $p_n^t$ over $Y_n^t$, we formally replace it with the integration of $p_n^t$ over $Y^t$ without further mention.  

Then we have
\begin{equation}\begin{split}
A(t_0)=\limsup_{n\to\infty}\int_0^{t_0}[h_n'(t)-h'(t)]dt\\
\leq 2\limsup_{n\to\infty}\int_0^{t_0}\Big{|} \iint_{Y^t} \frac{\mu}{1-|t\mu |^2}\frac{g^t_z}{\overline{g^t_z}}\circ (g^t)^{-1}\Big{(}p_n^t-p^t\Big{)}\Big{|}dt\\
\leq \frac{2\|\mu\|_{\infty}}{1-\|\mu \|_{\infty}^2}\limsup_{n\to\infty} \int_0^{t_0} \iint_{Y_n^t} \Big{|}p_n^t-p^t\Big{|}dt
\end{split}
\end{equation}

Since $q_n$ approximate $q$ and by the change of the variables, there exists $M>0$ such that
$ \iint_{Y^t} {|}p_n^t-p^t{|}\leq M$. The Fatou's lemma gives
\begin{equation}
\label{eq:limsup-change}
\begin{split}
 \limsup_{n\to\infty}\int_0^{t_0}\iint_{Y^t} {|}p_n^t-p^t{|}dt
 =Mt_0-\liminf_{n\to\infty}  
 \int_0^{t_0} \Big{(} M-
 \iint_{Y^t} {|}p_n^t-p^t{|}\Big{)}dt
  \Big{)}\\
  \leq Mt_0-\int_0^{t_0} \liminf_{n\to\infty}\Big{(} M-\iint_{Y^t} {|}p_n^t-p^t{|}\Big{)}dt\\
  \leq 
  \int_0^{t_0}\limsup_{n\to\infty}\iint_{Y^t} {|}p_n^t-p^t{|}dt
\end{split}
\end{equation}

Then we obtain
\begin{equation}
\label{eq:A(t_0)}
\begin{split}
A(t_0)\leq \frac{2\|\mu\|_{\infty}}{1-\|\mu \|_{\infty}^2}\int_0^{t_0}\Big{(} \limsup_{n\to\infty}\iint_{Y^t} {|}p_n^t-p^t{|}\Big{)}dt\\
\leq \frac{2\|\mu\|_{\infty}}{1-\|\mu \|_{\infty}^2}\int_0^{t_0}\Big{(} \limsup_{n\to\infty}(\iint_{Y^t} {|}p_n^t-p^t{|}-|p_n^t|)\\ + \limsup_{n\to\infty}\iint_{Y^t}|p_n^t|\Big{)}dt.
\end{split}
\end{equation}

By the above inequality, since $p_n^t$ converges uniformly on compact subsets of $Y^t$ to $p^t$ and by an application of the Lebesgue Dominated Convergence Theorem to the first integral on the right, we get
\begin{equation}
\label{eq:A(t_0)-continuation}
\begin{split}
A(t_0)\leq 
\frac{2\|\mu\|_{\infty}}{1-\|\mu \|_{\infty}^2}\int_0^{t_0}\Big{(}\iint_{Y^t} -|p^t|
+ \limsup_{n\to\infty}\iint_{Y^t}|p^t|\Big{)}dt\\
=\frac{2\|\mu\|_{\infty}}{1-\|\mu \|_{\infty}^2}\int_0^{t_0}A(t)dt\leq \frac{2\|\mu\|_{\infty}}{1-\|\mu \|_{\infty}^2}t_0A(t_0)=
\frac{2\|\mu\|_{\infty}}{1-\|\mu \|_{\infty}^2}t_0S
\end{split}
\end{equation}

Since $A(t_0)>S/2$, the above inequality implies
$$
S/2< \frac{2\|\mu\|_{\infty}}{1-\|\mu \|_{\infty}^2}t_0S<\frac{2\|\mu\|_{\infty}}{1-\|\mu \|_{\infty}^2}S
$$
which gives 
$$
1-\|\mu\|_{\infty}^2<4\|\mu\|_{\infty}.
$$
The above inequality gives a contradiction when $\|\mu\|_{\infty}<1/5$.

Therefore the statement is true for all $f$ whose Beltrami coefficient has norm less than $1/5$. Since each quasiconformal map can be written as a composition of finitely many quasiconformal maps whose Beltrami coefficients have norms less than $1/5$ and the heights map of a composition is the composition of the height maps, the proposition follows.
\end{proof}

\end{document}